\documentclass[10pt]{amsart}

\usepackage{hyperref}
\usepackage{enumerate}
\usepackage{comment}

\makeatletter

\@namedef{subjclassname@2010}{

  \textup{2010} Mathematics Subject Classification}

\usepackage{amssymb, amsmath,amsthm}
\usepackage{mathrsfs}
\newtheorem{thm}{Theorem}[section]

\newtheorem{prop}[thm]{Proposition}
\newtheorem{conj}[thm]{Conjecture}

\newtheorem{cor}[thm]{Corollary}
\newtheorem{lem}[thm]{Lemma}
\theoremstyle{definition}
\newtheorem{rem}[thm]{Remark}

\newcommand{\ra}{\rightarrow}
\newcommand{\bk}{\backslash}
\newcommand{\mc}{\mathcal}

\newcommand{\mb}{\mathbb}
\newcommand{\sg}{\sigma}

\renewcommand{\ss}{\substack}

\renewcommand{\bar}{\overline}

\begin{document}
\title{On a conjecture of Goldmakher}
\author{Alexander P. Mangerel}
\address{Department of Mathematical Sciences, Durham University, Stockton Road, Durham, DH1 3LE, UK}
\email{smangerel@gmail.com}
\begin{abstract}
We construct a $1$-bounded completely multiplicative function $f$ whose logarithmically-averaged partial sums satisfy
$$
\limsup_{x \ra \infty} \frac{\left|\sum_{n \leq x} \frac{f(n)}{n}\right|}{1+\exp\left(\sum_{p \leq x} \frac{\text{Re}(f(p))}{p}\right)}
= \infty.
$$
This disproves a conjecture of Goldmakher from 2009. 
\end{abstract}
\maketitle
\section{Main Results}
Let $\mc{F}$ denote the collection of all $1$-bounded, completely multiplicative functions. For $f \in \mc{F}$ we define
$$
L_f(x) := \sum_{n \leq x} \frac{f(n)}{n}, \quad x \geq 1.
$$
Given $f,g \in \mc{F}$ and $z \geq 1$ we denote by
$$
\mb{D}(f,g;z) := \left(\sum_{p \leq z} \frac{1-\text{Re}(f(p)\bar{g(p)})}{p}\right)^{1/2},
$$
the \emph{pretentious distance} between $f$ and $g$ up to $z$. \\
In \cite{Gold}, Goldmakher conjectured the following bound, relating the size of the logarithmic partial sums of $f$ to its pretentious distance from the constant function $1$.
\begin{conj}[\cite{Gold}, Conj. 2.6] \label{conj:Gold}
Let $f \in \mc{F}$. Then for any $1 \leq y \leq x$ we have
$$
\sum_{\ss{n \leq x \\ p|n \Rightarrow p \leq y}} \frac{f(n)}{n} \ll 1 + (\log y)e^{-\mb{D}(f,1;y)^2}.
$$
\end{conj}
Morally, Goldmakher's conjecture asserts that the only way for $L_f(x)$ to be ``large'' (or even \emph{unbounded} with $x$) is if $f$ ``pretends'' to be the constant function $1$, i.e., $\mb{D}(f,1;x)^2$ does not grow too fast with $x$. This is in contrast to the situation for ordinary (unweighted) partial sums 
$$
\tilde{M}_f(x) := \frac{1}{x}\sum_{n \leq x} f(n),
$$
for which it is known that $\tilde{M}_f(x) \gg 1$ whenever $f \in \mc{F}$ ``pretends'' to be $n^{it}$ for \emph{some} fixed $t \in \mb{R}$. \\
Estimates of the kind given by Conjecture \ref{conj:Gold} (especially in the case $y = x$) are sought after in the literature on multiplicative functions, as obtaining quantitative bounds on the size of $L_f(x)$ is made easier if all that is required is to know how close $f$ is to $1$ (with respect to pretentious distance), rather than to $n^{it}$ with some uniformity in a range of $t \in \mb{R}$.\\
A significant, well-known example of this arises from the work of Hall and Tenenbaum \cite{HT} and Hall \cite{Hall} (see also the earlier works of Elliott \cite{Ell} and Hal\'{a}sz \cite{Hal}). They proved several results concerning the best possible constant $\kappa = \kappa_f > 0$ that can be taken so that
$$
\left|\tilde{M}_f(x) \right| \ll e^{-\kappa \mb{D}(f,1;x)^2}.
$$
The example $f(n) = n^{it}$ with $t \neq 0$ shows that $\inf_{f \in \mc{F}} \kappa_f = 0$ (as in this case, $\mb{D}(n^{it},1;x)^2 \ra \infty$ by virtue of $\zeta(1+it) \neq 0$); however when the infimum is taken over 
$$
\mc{F}_{\Omega} := \{f \in \mc{F}: \ f(p) \in \Omega \text{ for all primes } p \},
$$
for certain convex sets $\Omega \subseteq \{z \in \mb{C} : \ |z| \leq 1\}$ containing $0$ it is possible for 
$$
\kappa_{\Omega} := \inf_{f \in \mc{F}_{\Omega}}\kappa_f > 0.
$$ 
As a well-known special case, when $\Omega = [-1,1]$, i.e., when restricting to $1$-bounded, real-valued functions, one has $\kappa_{[-1,1]} = 0.32867...$ (the value of $-\cos \phi_0$, where $\phi_0$ is the unique root of the equation $\sin \phi_0 - \phi_0 \cos \phi_0 = \pi/2$). Rather definitive results were obtained in far more generality in \cite{Hall}. In many cases, however, one can also show that $\kappa_{\Omega} < 1$.\\
 In the case of logarithmic partial sums, the situation is rather different. When $f \in \mc{F}$ is real-valued, for example, it is a simple consequence of \eqref{eq:triv} below and the non-negativity of the multiplicative function $1\ast f$ that
 $$
 L_f(x) \ll 1 + (\log x) e^{-\mb{D}(f,1;x)^2},
 $$
thus confirming Conjecture \ref{conj:Gold} for the subcollection $\mc{F}_{[-1,1]}$ (see also \cite[Cor. 2.7]{NegTrunc}, where the secondary term $1$ is refined to $(\log x)^{2/\pi - 1}$). \\
When $f$ is not real-valued the situation is less clear. Since $L_{n^{it}}(x)$ is well-approximated by $\zeta(1+1/\log x - it)$, which is rather small compared to $\log x$ when $t \neq 0$ is fixed and $x \ra \infty$, one does not expect the distance from $f$ to $n^{it}$ for $|t| \geq 1$, say, to influence the bounds on $L_f(x)$ if this is unbounded. Indeed, as a special case of \cite[Thm. 1.4]{LamMan}, Lamzouri and the author showed that for any $f \in \mc{F}$ and $2 \leq y \leq x$, 
$$
\sum_{\ss{n \leq x \\ p|n \Rightarrow p \leq y}} \frac{f(n)}{n} \ll 1 + (\log y) e^{-\min_{|t| \leq 1} \mb{D}(f,n^{it};y)^2},
$$
thus giving a weak variant of Conjecture \ref{conj:Gold}. \\
As far as bounds given only in terms of $\mb{D}(f,1;x)^2$, a first result in this direction was obtained by Granville and Soundararajan, as a key ingredient in their breakthrough work on improvements to the P\'{o}lya-Vinogradov inequality for odd order characters \cite{GSPret}. They showed that for all $f \in \mc{F}$ we have the weaker inequality
\begin{equation}\label{eq:GShalf}
L_f(x) \ll 1 + (\log x) e^{-\tfrac{1}{2}\mb{D}(f,1;x)^2}.
\end{equation}
The improvements in \cite{Gold} on the odd order character sum problem motivate the speculation of Conjecture \ref{conj:Gold} that $1/2$ can be improved to $1$ in \eqref{eq:GShalf}. \\
In \cite[Prop. 1.2]{GraMan}, Granville and the author showed that
\begin{equation}\label{eq:genUppBd}
L_f(x) 
\ll (\log\log x)\left(1+(\log x)e^{-\lambda \mb{D}(f,1;x)^2}\right),
\end{equation}
where $\lambda = 0.8221...$ (the solution to an explicit integral equation), and that moreover $\lambda$ is best possible in that for any large scale $x$ there exists a function $f = f_x \in \mc{F}$ such that
\begin{equation}\label{eq:optEx}
L_f(x) \asymp (\log x)\exp\left(-\lambda \mb{D}(f,1;x)^2\right).
\end{equation}
The latter results already suggest that the estimate in Conjecture \ref{conj:Gold} (with exponent $1$ in place of $\lambda$) is not correct. However, the estimate \eqref{eq:genUppBd} is, in many cases, weaker than Goldmakher's conjecture (for example, whenever $|L_f(x)|$ is bounded). Also, the example furnishing the optimality of $\lambda$ in \eqref{eq:optEx} satisfies $L_f(x) \ll 1$, and is therefore not a counterexample to Conjecture \ref{conj:Gold}. \\
The purpose of this note is to definitively disprove Goldmakher's conjecture.
\begin{thm}\label{thm:infScales}
There exists $f \in \mc{F}$ such that
$$
\limsup_{x \ra \infty} \frac{|L_f(x)|}{1+(\log x)e^{-\mb{D}(f,1;x)^2}}
= \infty.
$$
Thus, Conjecture \ref{conj:Gold} is false.
\end{thm}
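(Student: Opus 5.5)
The plan is to disprove the conjecture by exploiting the gap between the exponent $1$ of Conjecture \ref{conj:Gold} and the optimal exponent $\lambda = 0.8221\ldots<1$ from \eqref{eq:genUppBd}--\eqref{eq:optEx}. At a single scale $x$, \cite[Prop. 1.2]{GraMan} gives $f_x$ with $|L_{f_x}(x)| \asymp (\log x)e^{-\lambda \mb{D}(f_x,1;x)^2}$. If we could arrange this with $\mb{D}(f_x,1;x)^2 = K$ while keeping $(\log x)e^{-\lambda K}\gg 1$, then the ratio in Theorem \ref{thm:infScales} would be $\gg e^{(1-\lambda)K}$.

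The published extremal example has $L_f(x)\ll 1$, so the first step is to revisit the construction in \cite{GraMan}. I expect it to be of the form $f(p)=e^{i\theta(\log p/\log x)}$, with a phase profile $\theta(u)$ on $[0,1]$ solving the extremal integral equation. I would rescale it to a profile $\theta_K$ whose cost is only
$$\int_0^1 \frac{1-\cos\theta_K(u)}{u}\,du = K + O(1),$$
for a prescribed large constant $K$. In practice I take $\theta_K\equiv 0$ on $[0,u_K]$, with $u_K$ chosen so that the active range carries distance $\approx K$. The target is
$$|L_f(x)| \gg_K (\log x)e^{-\lambda K}$$
uniformly as $x\to\infty$. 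To compute $L_f(x)$, I would use the standard decomposition $L_f(x)=\int$ of the logarithmic density of the multiplicative function along $\log n/\log x$, that is, the integral/delay equation $u\,\sigma'(u)=\int_0^u e^{i\theta(v)}\sigma(u-v)\,dv/v$-type representation. With this representation the main term is $(\log x)\,\sigma_K(1)$, and I need $|\sigma_K(1)|\gg e^{-\lambda K}$, which is precisely the content of the optimality statement \eqref{eq:optEx} at the level of the limiting profile.

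Next I glue infinitely many scales. I choose rapidly increasing $x_1<x_2<\cdots$, say $\log\log x_{k}\ge e^{\log\log x_{k-1}}$, and constants $K_k\to\infty$ slowly, with $K_k\le \tfrac12\log\log x_k$ so that $(\log x_k)e^{-\lambda K_k}\to\infty$. Define $f(p)=1$ for $x_{k-1}<p\le x_k^{u_{K_k}}$, and $f(p)=e^{i\theta_{K_k}(\log p/\log x_k)}$ for $x_k^{u_{K_k}}<p\le x_k$. Then
$$\mb{D}(f,1;x_k)^2 = K_k + \mb{D}(f,1;x_{k-1})^2 + O(1).$$
Here the second term is at most $\sum_{j<k}(K_j+O(1))$, which I can force to be $o(K_k)$, or just $\le K_k$, by choosing $K_k$ growing fast relative to its predecessors but still tiny compared to $\log\log x_k$. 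The ratio at $x=x_k$ is then
$$\gg \exp\bigl((1-\lambda)K_k - \textstyle\sum_{j<k}K_j - O(1)\bigr)\to\infty.$$

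The hard part is showing that the earlier scales do not destroy the single-scale asymptotic. Write $f=g_k\ast h_k$ in Dirichlet-series terms, where $g_k$ is supported on $x_{k-1}$-smooth numbers and $h_k$ on $x_{k-1}$-rough numbers. Then
$$L_f(x_k)=\sum_{m\ \mathrm{smooth}} \frac{g_k(m)}{m}\,L_{h_k}(x_k/m).$$
Since $x_{k-1}$ is minuscule relative to $x_k$, the smooth $m$ with $m>x_k^{\epsilon}$ contribute negligibly by Rankin's trick. For $m\le x_k^{\epsilon}$ we have $L_{h_k}(x_k/m)\approx L_{h_k}(x_k)$ by continuity of $\sigma_{K_k}$. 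This gives
$$L_f(x_k)\approx L_{h_k}(x_k)\prod_{p\le x_{k-1}}(1-f(p)/p)^{-1}.$$
The Euler product has modulus $\asymp \log x_{k-1}\,e^{-\mb{D}(f,1;x_{k-1})^2}$. Meanwhile $h_k$ behaves like the single-scale example except that it omits primes below $x_{k-1}$, which only costs a factor $\asymp 1/\log x_{k-1}$ in the $f\equiv1$ range. The two effects combine to $|L_f(x_k)|\gg(\log x_k)e^{-\lambda K_k-\mb{D}(f,1;x_{k-1})^2}$, which is exactly what the ratio computation above requires. Making this uniform in $K_k$, including the dependence of the error terms in the integral-equation analysis on $K$, is the main technical obstacle.
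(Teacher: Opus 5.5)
There is a genuine gap, and it sits in the single-scale step that carries the whole proof. You need, for each fixed large $K$, a function with $\mathbb{D}(f,1;x)^2=K+O(1)$ and $|L_f(x)|\gg_K(\log x)e^{-\lambda K}$ as $x\to\infty$. You call this ``precisely the content of'' \eqref{eq:optEx} at the level of the limiting profile, but it is not.

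As the introduction notes, the extremal example of \cite{GraMan} satisfies $L_f(x)\ll 1$. Combined with $L_f(x)\asymp(\log x)e^{-\lambda\mathbb{D}^2}$, this forces $\mathbb{D}(f_x,1;x)^2\geq\lambda^{-1}\log\log x-O(1)$. So in that example:
\begin{itemize}
\item the distance grows with $x$, and the example is not a fixed profile in $\log p/\log x$;
\item its normalized value $L_f(x)/\log x$ tends to $0$;
\item at that scale the conjectured bound holds because of the term $1$.
\end{itemize}
Truncating to $\theta_K\equiv 0$ on $[0,u_K]$ produces a different function, and nothing in \eqref{eq:optEx} tells you how $L_f(x)$ behaves for it. The upper bound \eqref{eq:genUppBd} is no guide either, since for fixed $K$ it loses a factor $\log\log x$.

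What you actually need is a new extremal statement about the continuous problem. In terms of your normalized limit $\sigma_\theta(1)=\lim_{x\to\infty}L_f(x)/\log x$, you need $\sup\{|\sigma_\theta(1)|e^{K}:\ \int_0^1(1-\cos\theta(u))\,du/u=K\}\to\infty$, at rate $e^{(1-\lambda)K}$ in your formulation. You give no mechanism for this and no reason why $\lambda$ should be the relevant constant. The obvious bounded-distance examples are consistent with exponent $1$: for instance $f(n)=n^{i\tau/\log x}$ has $|L_f(x)|\ll(\log x)e^{-\mathbb{D}(f,1;x)^2}$.

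The missing ingredient is a mechanism that beats exponent $1$, and the paper's is quite different from what you propose. Conjecture \ref{conj:altGold} implicitly assumes the Perron integral is dominated by the peak of $L(s,f)\zeta(s)$ at $t=0$. The paper builds $f_{\xi,\delta}$, tilting $f(p)$ towards $p^{i\xi}$ with $\xi=(\log x)^{-\alpha}$, so that:
\begin{itemize}
\item the peak at $\sg+i\xi$ exceeds the one at $\sg$ by $(\log x)^{c\delta}$ (Proposition \ref{prop:xiPeak});
\item $\mathcal{R}_f(y)$ stays controlled for all $y\leq x$ (Proposition \ref{prop:Rx}).
\end{itemize}
Plancherel (Lemma \ref{lem:Planch}) then shows that the assumed bound cannot hold at every scale. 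This yields only the \emph{existence} of a bad scale $x^\ast$ in a certain range (Remark \ref{rem:counter}), not a lower bound for $L_f(x)$ at a prescribed point. Moreover, in this example $\mathbb{D}(f,1;x)^2=(1-\beta)\log\log x+O(1)$ is unbounded, unlike in your plan.

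Accordingly, the patching for Theorem \ref{thm:infScales} is inductive. At each stage the bad scale is located a posteriori, and the earlier modifications cost only $O(\log\log\log x_{K+1})$ in the relevant prime sums. Your gluing analysis (splitting off the $x_{k-1}$-smooth part and using Rankin's trick) is reasonable in itself, but it has nothing to glue until the single-scale lower bound is actually proved.
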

Key to the construction of the counterexample given in Theorem \ref{thm:infScales} is the following finite scale version.
\begin{thm} \label{thm:counter}
Let $C > 0$. Then there is $x_0(C)$ such that for every $x \geq x_0(C)$ there is a function $f = f_x \in \mc{F}$ such that
$$
L_f(x) \geq C\left(1+(\log x)e^{-\mb{D}(f,1;x)^2}\right).
$$
\end{thm}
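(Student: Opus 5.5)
We will take $f$ of the form $f(p)=e^{i\Theta(\log p/\log x)}$ for $p\le x$, with $\Theta:(0,1]\to[-\pi,\pi]$ a fixed piecewise constant profile, and $f(p)$ chosen arbitrarily for $p>x$, since this does not affect $L_f(x)$. Both sides of the desired inequality then become explicit functionals of $\Theta$, up to $o(1)$ relative errors as $x\to\infty$. On the right-hand side, by the prime number theorem in the form $\sum_{x^{a}<p\le x^{b}}1/p=\log(b/a)+o(1)$ we get
$$
(\log x)e^{-\mb{D}(f,1;x)^2}=\exp\Bigl(\int_{1/\log x}^{1}\frac{\cos\Theta(v)}{v}\,dv+O(1)\Bigr).
$$
Here the $O(1)$ comes only from the primes below a fixed power of $\log x$, where we set $\Theta\equiv 0$. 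On the left-hand side, write $L_f(x)=\int_1^x t^{-1}\,d\bigl(\sum_{n\le t}f(n)\bigr)$. Consider the sums $S(u)=\sum_{n\le x^u}f(n)\Lambda(n)/n$ and use the identity $f\log=f\ast f\Lambda$. This gives a Volterra-type integral equation for $\sigma(u):=L_f(x^u)/\log x$, of the shape $u\,\sigma(u)=\int_0^u \sigma(u-v)e^{i\Theta(v)}\,dv+o(1)$. This is the same mechanism, via Wirsing/Delange-type estimates, that produces the integral equation defining $\lambda$ in \cite{GraMan}. I would establish this reduction first; it is routine given a zero-free-region-free prime number theorem, since $\Theta$ is constant on intervals.

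The theorem then reduces to a purely analytic claim. For every $C$ there is a profile $\Theta$, together with a cutoff scale, such that $|\sigma(1)|\,(\log x)$ exceeds $C$ times $1+\exp\bigl(\int\cos\Theta(v)\,dv/v\bigr)$. The single-phase choice $\Theta\equiv\theta$ is useless here: it gives $L_f(x)\sim(\log x)^{e^{i\theta}}/\Gamma(1+e^{i\theta})$, so the ratio is the bounded quantity $1/|\Gamma(1+e^{i\theta})|$. The gain must therefore come from \emph{changing} the phase across scales.

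My concrete first attempt is as follows. Set $f(p)=1$ for $p\le y=x^{1/u}$, and let $f(p)=e^{i\theta_j}$ on the successive ranges $(y^{u_{j-1}},y^{u_j}]$ with $u_0=1<u_1<\dots<u_k=u$. The phases $\theta_j$ are chosen, like a rotating frame, so that the new contribution at each scale lines up with the current argument of $\sigma$. Meanwhile $\cos\theta_j$ stays small enough that $\mb{D}(f,1;x)^2\ge \log u+K$ with $K$ large. On each block the equation is solved explicitly through the Laplace transform: $\hat\sigma$ becomes a product of factors $\exp\bigl(\int e^{i\Theta(v)}(e^{-sv}-1)\,dv/v\bigr)$. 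The aim is to show that $|\sigma(u)|/\exp\bigl(\int\cos\Theta\bigr)$ picks up a factor bounded below by some $1+\delta>1$ at each well-chosen block. Iterating $k$ blocks then gives a ratio of at least $(1+\delta)^k$, and we take $k\approx\log C/\log(1+\delta)$, so that $u$ and the threshold $x_0(C)$ depend only on $C$. Finally, the phase of $f$ may be rotated globally only by using $f(p)=1$ on small primes, which ensures $L_f(x)$ itself, rather than just $|L_f(x)|$, is large and positive.

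The main obstacle is this last analytic step: exhibiting a profile $\Theta$ for which the integral-equation solution genuinely beats $\exp\bigl(\int\cos\Theta/v\bigr)$ by an unbounded factor. Since the Euler-product heuristic $|L_f|\approx\exp(\mathrm{Re}\sum f(p)/p)$ predicts a bounded ratio, the gain has to come entirely from the "$\Gamma$-factor" interference between scales. I would first compute numerically with two phases, $\Theta=0$ on $[0,1/u]$ and $\Theta=\pm\theta$ beyond, to locate a one-step gain $\delta>0$. I would then prove that gain rigorously by comparing explicit Laplace inversions, and only afterwards iterate it.
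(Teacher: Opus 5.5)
Your reduction puts the entire content of the theorem into the final ``purely analytic claim'', and that claim is not proved. Locating a one-step gain numerically and then iterating to $(1+\delta)^k$ is a plan, not an argument, and the iteration step has no mechanism behind it. On block $j+1$, $\sigma$ is a convolution of the new phase against the whole history $\sigma|_{[0,u_j]}$, not a function of the single number $\sigma(u_j)$. So there is no reason the ratio $|\sigma(u_{j+1})|e^{\mb{D}^2}$ should be at least $(1+\delta)$ times the ratio at $u_j$. A two-phase numerical computation cannot give a gain that holds uniformly over all possible histories.

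The integral equation you wrote also has the wrong kernel. For $\sigma(u)=L_f(x^u)/\log x$, the identity $f\log = f\ast f\Lambda$ together with partial summation gives $u\sigma(u)=\int_0^u\sigma(u-v)\bigl(1+e^{i\Theta(v)}\bigr)\,dv$. You can check this with $\Theta\equiv 0$, where $\sigma(u)=u$. What you wrote is the mean-value equation for $\frac1X\sum_{n\le X}f(n)$. The missing $1$ is the $\zeta$-factor, and the actual mechanism depends on it. Your diagnosis that the gain cannot come from Euler products is also off: it does come from an Euler product, just evaluated off the real axis.

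The missing idea is that block factors \emph{do} multiply, but in the transform domain and at a common point. The paper fixes a frequency $\xi$ and lets $f(p)$ depend on $\xi\log p$. It takes $|f(p)|=1$, sets $\text{Re} f(p)$ equal to $\cos(\xi\log p)$ pushed slightly down (multiplied by $1-\delta$ or $1+\delta$ according to its sign, with exceptions near $-1$), and gives $\text{Im} f(p)$ the sign of $\sin(\xi\log p)$. This forces $\text{Re}\bigl((1+f(p))(p^{-i\xi}-1)\bigr)\ge 0$ for every $p$, and $\gg\delta$ for a positive proportion of $\xi\log p \bmod 2\pi$. By Mertens and Hall--Tenenbaum, $|L(s,f)\zeta(s)|$ at $s=1+\frac{1}{\log x}+i\xi$ therefore exceeds its value at $s=1+\frac{1}{\log x}$ by a factor $\exp(c\delta\log(\xi\log x))$, while $\mc{R}_f(x)\gg\log\log x$.

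This peak is not inverted pointwise. Instead, assume the bound at \emph{all} scales. Plancherel for $\tilde M_{1\ast f}(e^v)e^{-v/\log x}$ then shows that the weighted $L^2$-mass of $\tilde M_{1\ast f}(x^y)$ is too large compared with $e^{2\mc{R}_f(x)}$. Here Proposition \ref{prop:Rx} controls $\mc{R}_f(x^y)$ across $y$, and Lemma \ref{lem:HR} handles the tails.

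This mechanism can be transplanted into your fixed-profile setting. Take $\xi=T/\log x$ and $f(p)=1$ for $p\le x^{1/T}$, with $T$ a large power of $C$; the gain is then $T^{c\delta}$. In that construction, however, the phase rotates linearly in $\log p$ through $\asymp T$ turns rather than being constant on about $\log C$ blocks. The bad scale also comes out of the $L^2$ contradiction argument, not from an explicit Laplace inversion.
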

\subsection{Proof Idea}
In the sequel, for any arithmetic function $h$ and $x \geq 1$ we define
$$
M_h(x) := \sum_{n \leq x} h(n), \quad \tilde{M}_h(x) := \frac{1}{x}M_h(x).
$$
In light of the elementary estimate
\begin{equation}\label{eq:triv}
L_f(x) = \tilde{M}_{1\ast f}(x) + O(1),
\end{equation}
as well as Mertens' theorem, we may obtain the following consequence of Goldmakher's conjecture, in the case $y=x$; it is this statement that we shall disprove in the sequel.
\begin{conj}[Consequence of Conjecture \ref{conj:Gold}] \label{conj:altGold}
Let $f \in \mc{F}$ and set $g := 1\ast f$. Then for any $x \geq 1$,
\begin{equation}\label{eq:Goldforh}
\tilde{M}_{g}(x) \ll 1 + \exp\left(\sum_{p \leq x} \frac{\text{Re}(f(p))}{p}\right).
\end{equation}
\end{conj}
Here and elsewhere, for $f \in \mc{F}$ and $s \in \mb{R}$ with $\text{Re}(s) > 1$ we write
$$
L(s,f) := \sum_{n \geq 1} \frac{f(n)}{n^s}.
$$
Now, according to Perron's formula the LHS of \eqref{eq:Goldforh} depends on the sizes of the peaks of the function
$$
\frac{L(\sg + it,f) \zeta(\sg + it)}{\sg + it} x^{\sg + it}, \quad \sg = 1+1/\log x, \ t \in \mb{R}.
$$
We already expect any such peaks to occur for $|t| \leq 1$, where $|\zeta(\sg + it)|$ is largest. Since
$$
|L(\sg,f)| \asymp \exp\left(\sum_{p \leq x} \frac{Re(f(p))}{p}\right)
$$
(as in \eqref{eq:Mertcons} below), Conjecture \ref{conj:altGold} implicitly suggests that the Perron integral will be dominated by a peak at $t = 0$. 
Our counterexample emerges from investigating whether other possible peaks can occur. In particular, to prove Theorem \ref{thm:counter}, given a large scale $x$ we select a suitable $\xi = \xi(x) \in [-1,1]$ (in fact, $\xi \ra 0$ as $x \ra \infty$) and construct a function $f = f_x \in \mc{F}$ such that
\begin{equation}\label{eq:LfncompIntro}
|L(\sg + i \xi,f)\zeta(\sg + i\xi)| > (\sg-1)^{-c}|L(\sg,f)\zeta(\sg)|,
\end{equation}
for some absolute constant $c > 0$. The additional factor $(\sg-1)^{-c}$ is crucial for us in establishing $\sg + i\xi$ as a dominant peak in the Perron integral representing $\tilde{M}_{1\ast f}(x)$ (or more precisely in an $L^2$-integral related to Bessel's inequality for the Fourier transform of $\tilde{M}_{1\ast f}(e^v) e^{-(\sg-1) v}$; see Section \ref{sec:proof} below). \\
We deduce Theorem \ref{thm:infScales} by patching together in a compatible way the counterexamples produced by the proof of Theorem \ref{thm:counter}, at a suitably chosen infinite increasing sequence of scales; see Section \ref{sec:infScales} for the details.
\section{Construction of a counterexample}
Let $x$ be a large scale and set $\sg := 1+1/\log x$. As in the discussion of the previous section, our goal is to choose $\xi = \xi(x) \in [-1,1]$ and $f = f_x \in \mc{F}$ such that an estimate of the shape \eqref{eq:LfncompIntro} holds. As a first step we investigate sufficient conditions required to ensure the validity of the more modest inequality
\begin{equation}\label{eq:Lfncomp}
|L(\sg + i \xi,f)\zeta(\sg + i\xi)| > |L(\sg,f)\zeta(\sg)|.
\end{equation}
Let us assume that such a pair $(\xi,f)$ exists, in order to determine what properties it must satisfy. \\
As a standard consequence of Mertens' theorem, for any $h \in \mc{F}$ and $t \in \mb{R}$ we have
\begin{equation}\label{eq:Mertcons}
|L(\sg + it,h)| \asymp \exp\left(\sum_p \frac{\text{Re}(h(p)p^{-it})}{p^{\sg}}\right) \asymp \exp\left(\sum_{p \leq x} \frac{\text{Re}(h(p)p^{-it})}{p}\right).
\end{equation}
Applying this with $h \in \{1,f\}$ and $t \in \{0,\xi\}$, then taking logarithms of both sides, we obtain from \eqref{eq:Lfncomp} that
$$
\sum_{p \leq x} \frac{\text{Re}(f(p)(p^{-i\xi}-1))}{p} > \sum_{p \leq x} \frac{\text{Re}(1-p^{-i\xi})}{p} + O(1),
$$
or equivalently, that
$$
\sum_{p \leq x} \frac{\text{Re}((1-p^{-i\xi})(1+f(p)))}{p} + O(1) < 0.
$$
Using 
$$
\text{Re}(z\bar{w}) = \text{Re}(z)\text{Re}(w) + \text{Im}(z)\text{Im}(w)
$$ 
we may thus obtain \eqref{eq:Lfncomp} by ensuring that for \emph{most} primes $p$,
\begin{equation}\label{eq:toCheck}
\text{Im}(f(p))\sin(\xi\log p) > (1-\cos(\xi\log p))(1+\text{Re}(f(p))),
\end{equation}
and that the weaker \emph{non-strict} inequality holds for\footnote{We remark that whenever $f \in \mc{F}$ the RHS is non-negative, and so such a strict inequality is not necessarily possible for all $p$, e.g. if $\sin(\xi \log p) = 0$ for some $p$.} \emph{all} $p$. \\
Fix $\delta \in (0,1/2)$ to be small, and $\xi \in [-1,1]$. In the sequel, to simplify notation let us write\footnote{In the sequel we denote the set of primes by $\mb{P}$.}
$$
\sg_p := \sin(\xi \log p), \quad \gamma_p := \cos(\xi \log p), \quad p \in \mb{P}.
$$
Define the functions $R = R_{\xi,\delta}$ and $I = I_{\xi,\delta}$ on primes via
\begin{equation}\label{eq:fDef}
R(p) := \begin{cases} (1-\delta) \gamma_p &\text{ if } \gamma_p \geq 0, \\ (1+\delta) \gamma_p &\text{ if } 0 > \gamma_p > -\frac{1}{1+\delta}, \\ \gamma_p &\text{ otherwise}; \end{cases} \quad I(p) := \text{sign}(\sg_p)(1-R(p)^2)^{1/2}
\end{equation}
(note that $I(p)$ is well-defined since it is easily seen that $|R(p)| \leq 1$ for all $p$). Finally, define a completely multiplicative function $f = f_{\xi,\delta}$ at primes via 
$$
f(p) := R(p) + i I(p), \quad p \in \mb{P}.
$$
\begin{prop} \label{prop:consf}
With the above notation, the function $f = f_{\xi,\delta}$ satisfies the following properties: 
\begin{enumerate}
\item We have $|f(p)| = 1$ for all primes $p$, thus $f \in \mc{F}$.
\item For every $p$ we have
\begin{equation}\label{eq:goalp}
\text{Re}((1-p^{-i\xi})(1+f(p))) \leq 0,
\end{equation}
with equality if and only if
$$
\gamma_p \in \{0,1\} \cup [-1,-1/(1+\delta)].
$$
\end{enumerate}
\end{prop}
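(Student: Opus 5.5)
The plan is a direct prime-by-prime computation: first verify $|R(p)|\le 1$ so that $f(p)$ is well defined with modulus $1$, then expand the real part in \eqref{eq:goalp} explicitly and reduce the inequality to the comparison $R(p)\le \gamma_p$.

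\emph{Part (1).} I will check $|R(p)|\le 1$ case by case from \eqref{eq:fDef}.
\begin{itemize}
\item If $\gamma_p\ge 0$, then $0\le R(p)=(1-\delta)\gamma_p\le 1$.
\item If $0>\gamma_p>-1/(1+\delta)$, then $-1<(1+\delta)\gamma_p<0$.
\item Otherwise $R(p)=\gamma_p\in[-1,1]$.
\end{itemize}
Hence $I(p)$ is well defined. Whenever $\sg_p\ne 0$ we get $|f(p)|^2=R(p)^2+(1-R(p)^2)=1$.

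The degenerate case $\sg_p=0$ needs a convention. There $\gamma_p=\pm1$. If $\gamma_p=-1$, then $R(p)=-1$ and $I(p)=0$ regardless. If $\gamma_p=1$, then with $\text{sign}(0)=0$ one would get $f(p)=1-\delta$, which is still $1$-bounded, so $f\in\mc{F}$ holds in any case. To get $|f(p)|=1$ literally I will adopt the convention $\text{sign}(0):=1$ (any choice of $\pm1$ works). This has no effect on part (2), because $I(p)$ only enters through the product $\sg_p I(p)$, which vanishes there.

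\emph{Part (2).} Since $p^{-i\xi}=\gamma_p-i\sg_p$, we have $1-p^{-i\xi}=(1-\gamma_p)+i\sg_p$. Therefore
$$
\text{Re}\big((1-p^{-i\xi})(1+f(p))\big)=(1-\gamma_p)(1+R(p))-\sg_p I(p).
$$
By construction $\sg_p I(p)=|\sg_p|\,(1-R(p)^2)^{1/2}$, and $|\sg_p|=(1-\gamma_p^2)^{1/2}$. So \eqref{eq:goalp} is equivalent to
$$
(1-\gamma_p)(1+R(p))\le (1-\gamma_p)^{1/2}(1+\gamma_p)^{1/2}(1-R(p))^{1/2}(1+R(p))^{1/2}.
$$
If $R(p)=-1$ or $\gamma_p=1$, both sides vanish and we have equality. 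Otherwise I divide by the positive quantity $\big((1-\gamma_p)(1+R(p))\big)^{1/2}$ and square. This gives the equivalent condition $(1-\gamma_p)(1+R(p))\le(1+\gamma_p)(1-R(p))$, which after expanding collapses to $R(p)\le\gamma_p$, with equality exactly when $R(p)=\gamma_p$.

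It remains to read off $R(p)$ versus $\gamma_p$ from \eqref{eq:fDef}.
\begin{itemize}
\item For $\gamma_p\ge0$: $(1-\delta)\gamma_p\le\gamma_p$, with equality iff $\gamma_p=0$.
\item For $0>\gamma_p>-1/(1+\delta)$: $(1+\delta)\gamma_p<\gamma_p$ strictly.
\item For $\gamma_p\le-1/(1+\delta)$: $R(p)=\gamma_p$, so equality holds.
\end{itemize}
The degenerate case $R(p)=-1$ forces $\gamma_p=-1$, and $\gamma_p=1$ was handled separately; both lie in the stated equality set. Combining the cases, equality holds iff $\gamma_p\in\{0,1\}\cup[-1,-1/(1+\delta)]$.

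The only delicate points are these degenerate cases, where dividing by zero or the value of $\text{sign}(0)$ must be handled separately. Everything else is elementary algebra.
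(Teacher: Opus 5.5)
Your proof is correct and follows essentially the same route as the paper: write \eqref{eq:goalp} as $\sg_p I(p)\ge (1-\gamma_p)(1+R(p))$ with both sides nonnegative, treat the cases $\gamma_p=\pm1$ (i.e.\ $\sg_p=0$) separately, and otherwise reduce by squaring to $R(p)\le\gamma_p$, reading off the equality cases from \eqref{eq:fDef}. Your remark that part (1) literally needs a convention such as $\text{sign}(0)=\pm1$ (otherwise $f(p)=1-\delta$ when $\gamma_p=1$) is a fair point that the paper's ``obvious'' passes over, and you are right that it does not affect part (2).
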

\begin{proof}
The first item is obvious, since $|f(p)|^2 = R(p)^2+ I(p)^2 = 1$ for all $p$. \\
We next prove the second item. As in \eqref{eq:toCheck}, we may rewrite \eqref{eq:goalp} as
\begin{equation}\label{eq:realpart}
I(p)\sg_p \geq (1-\gamma_p)(1+R(p)).
\end{equation}
We note that the LHS is $|\sg_p|(1-R(p)^2)^{1/2} \geq 0$, and as $|\gamma_p|,|R(p)| \leq 1$ the RHS is also patently non-negative. Therefore, it suffices to verify the equivalent inequality
\begin{equation}\label{eq:goal2}
I(p)^2 \sg_p^2 \geq (1-\gamma_p)^2(1+R(p))^2.
\end{equation}
If $\sg_p = 0$ then either $\gamma_p = 1$ or $\gamma_p = -1$. Clearly both sides are $0$ in the former case, while in the latter case the same is true because then $R(p) = \gamma_p = -1$. Thus,  \eqref{eq:goal2} must hold whenever $\sg_p = 0$. 
Let us assume henceforth that $\sg_p \neq 0$ (and thus $R(p) \neq -1$ as well). 
Now, we clearly have
$$
\frac{I(p)^2}{(1+R(p))^2} = \frac{1-R(p)^2}{(1+R(p))^2} = \frac{1-R(p)}{1+R(p)},
$$
and similarly
$$
\frac{(1-\gamma_p)^2}{\sg_p^2} = \frac{(1-\gamma_p)^2}{1-\gamma_p^2} = \frac{1-\gamma_p}{1+\gamma_p}.
$$
Therefore, \eqref{eq:goal2} is equivalent to
$$
\frac{1-R(p)}{1+R(p)} \geq \frac{1-\gamma_p}{1+\gamma_p}.
$$
or indeed, equivalently 
$$
R(p) \leq \gamma_p.
$$
By construction, we have $R(p) = (1-\delta)\gamma_p < \gamma_p$ for $\gamma_p > 0$, and $R_p = 0 = \gamma_p$ when $\gamma_p = 0$; also, $R(p) = (1+\delta)\gamma_p < \gamma_p$ when $\gamma_p \in (-1/(1+\delta),0)$, and $R(p) = \gamma_p$ for $\gamma_p \in [-1,-1/(1+\delta)]$. Thus we have $R(p) < \gamma_p$ for all $p$ satisfying $\gamma_p \notin [-1,-1/(1+\delta)) \cup \{0\}$, and $R(p) \leq \gamma_p$ for all $p$.\\
We have thus shown that \eqref{eq:goal2} holds for all $p$, and furthermore it holds with a \emph{strict} inequality \emph{unless} either $\sg_p = 0$ (in which case $\gamma_p  \in \{-1,1\}$) or $\gamma_p \in [-1,-1/(1+\delta)] \cup \{0\}$. Conversely, we have seen that equality holds with $\gamma_p = 1$, and if $\gamma_p \in [-1,-1/(1+\delta)] \cup\{0\}$ then $R(p) = \gamma_p$, and thus
$$
I(p)^2 \sg_p^2 = (1-\gamma_p^2)\sg_p^2 = (1-\gamma_p^2)^2 = (1-\gamma_p)^2(1+\gamma_p)^2 = (1-\gamma_p)^2(1+R(p))^2,
$$
so equality holds in this case. This completes the proof of item (2).
\end{proof} 
\section{Key properties of $f$} \label{sec:props}
In this section we find sufficient conditions on $\delta$ and $\xi$ in order for $f = f_{\xi,\delta}$, as defined in the previous section, to satisfy the \emph{stronger} inequality \eqref{eq:LfncompIntro}. As we show below, there is a constant $\beta = \beta(\xi,\delta) > 0$ such that
\begin{equation}\label{eq:mcRDef}
\mc{R}_f(x) := \sum_{p \leq x} \frac{\text{Re}(f(p))}{p} = \sum_{p \leq x} \frac{R(p)}{p} = \beta \log\log x + O(1),
\end{equation}
so that when $g = 1\ast f$ as above, \eqref{eq:Goldforh} implies
$$
\tilde{M}_g(x) \ll \exp\left(\sum_{p \leq x} \frac{R(p)}{p}\right) = e^{\mc{R}_f(x)}.
$$
\subsection{An estimate for $\mc{R}_f(y)$}
 The purpose of this subsection is to prove the following.
 \begin{prop} \label{prop:Rx}
 Let $\delta \in (0,1/2)$, $\xi\in [-1,1]$ and let $x \geq e^{1/|\xi|}$. Set
 \begin{equation}\label{eq:phiDef}
 \phi = \phi(\delta) := -\delta\left(2-\left(1-\frac{1}{(\delta + 1)^2}\right)^{1/2}\right) \in (-2\delta,-\delta)
 \end{equation}
 Then for each $1 \leq y \leq x$ we have
 \begin{equation}\label{eq:Rsum}
 \mc{R}_f(y) = \begin{cases} (1-\delta) \log\log y + O(1) &\text{ if } y \leq e^{1/|\xi|}, \\ (1-\delta) \log(1/|\xi|) - \frac{|\phi|}{\pi} \log(|\xi|\log y) + O(1) &\text{ if } e^{1/|\xi|} < y \leq x.\end{cases}
 \end{equation}
 In particular, we have $\mc{R}_f(y) \geq \mc{R}_f(x) + O(1)$ for all $e^{1/|\xi|} < y \leq x$.
 \end{prop}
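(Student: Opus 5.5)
The plan is to write $R(p)=F(\xi\log p)$, where $F$ is the even, $2\pi$-periodic function
$$
F(\theta):=\begin{cases}(1-\delta)\cos\theta & \text{if } \cos\theta\geq 0,\\ (1+\delta)\cos\theta & \text{if } -\frac{1}{1+\delta}<\cos\theta<0,\\ \cos\theta & \text{otherwise.}\end{cases}
$$
This $F$ is bounded and has bounded variation on each period; it has jumps where $\cos\theta=-1/(1+\delta)$. We then evaluate $\mc{R}_f(y)=\sum_{p\le y}F(\xi\log p)/p$ by comparing it with an integral through the prime number theorem, and split the sum at $e^{1/|\xi|}$.

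\emph{Small primes.} For $p\le e^{1/|\xi|}$ we have $|\xi|\log p\le 1$, so $\gamma_p\ge\cos 1>0$. Hence $R(p)=(1-\delta)\cos(\xi\log p)=(1-\delta)+O(\xi^2\log^2p)$. By Mertens,
$$
\sum_{p\le y}\frac{\xi^2\log^2 p}{p}\ll \xi^2\log^2 y\le 1 \quad\text{for } y\le e^{1/|\xi|}.
$$
The first case of \eqref{eq:Rsum} then follows from Mertens' theorem $\sum_{p\le y}1/p=\log\log y+O(1)$.

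\emph{Large primes.} For $e^{1/|\xi|}<y\le x$, we write the sum over $e^{1/|\xi|}<p\le y$ as a Stieltjes integral $\int F(\xi\log t)\,d\pi(t)/t$. We split $\pi(t)=\mathrm{li}(t)+E(t)$, where $E(t)\ll t(\log t)^{-3}$.

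The main term is $\int F(\xi\log t)\,\frac{dt}{t\log t}$. The substitution $v=|\xi|\log t$ turns it into $\int_1^{|\xi|\log y}F(v)\,\frac{dv}{v}$, using that $F$ is even. The mean value of $F$ over a period is
$$
\frac{1}{2\pi}\left[-\delta\int_{\cos\ge0}\cos\theta\,d\theta+\delta\int_{-1/(1+\delta)<\cos<0}\cos\theta\,d\theta\right],
$$
because $\int\cos=0$ over a period. The first integral equals $2$. For the second, the region $\{-1/(1+\delta)<\cos\theta<0\}$ is $\theta\in(\pi/2,\theta_0)$ together with its mirror image, where $\cos\theta_0=-1/(1+\delta)$; this gives $2\big((1-(1+\delta)^{-2})^{1/2}-1\big)$. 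Hence the mean value is exactly $\phi/\pi$.

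Since $F-\phi/\pi$ has a bounded, periodic antiderivative, integrating by parts against $1/v$ gives
$$
\int_1^{T}F(v)\,\frac{dv}{v}=\frac{\phi}{\pi}\log T+O(1),
$$
uniformly in $\xi$ and $\delta$. Adding the small-prime contribution $(1-\delta)\log(1/|\xi|)+O(1)$ yields the second case of \eqref{eq:Rsum}.

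\emph{Error term.} This is the main obstacle: the integrand is discontinuous, and the bound must be uniform in $\xi$. Integrating by parts, we must bound
$$
\int |E(t)|\;\Big|d\big(F(\xi\log t)/t\big)\Big|.
$$
The factor $1/t$ contributes $\int |E(t)|\,dt/t^2\ll\int dt/(t\log^3t)$, which is bounded. For the factor $F(\xi\log t)$: as $\log t$ ranges over an interval of length $L$, $\xi\log t$ covers $\ll 1+|\xi|L$ periods, each contributing $O(1)$ total variation, including the jumps. Decomposing dyadically in $\log t$, the block $\log t\asymp 2^k$ contributes $\ll(1+|\xi|2^k)\,2^{-3k}$, and the sum over $k$ converges. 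Boundary terms are handled the same way.

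\emph{Final claim.} For $e^{1/|\xi|}<y\le x$, the second case gives
$$
\mc{R}_f(y)-\mc{R}_f(x)=\frac{|\phi|}{\pi}\log\frac{\log x}{\log y}+O(1)\ge O(1),
$$
since $\log y\le\log x$.
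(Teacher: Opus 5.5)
Your proof is correct and follows the same route as the paper. Both split at $e^{1/|\xi|}$, Taylor-expand $\gamma_p$ for the small primes, and for the large primes use equidistribution of $\xi\log p$ modulo $2\pi$ against the weights $1/p$, with mean value $\phi/\pi$. The differences are minor: you prove that equidistribution estimate directly from the prime number theorem (partial summation plus a uniform total-variation bound), where the paper cites the Hall--Tenenbaum lemma (Lemma \ref{lem:TenEst}); and you treat $S_2$, $S_3$, $S_4$ together through the single function $F$, computing its mean via $F-\cos$ rather than summing $I(h_2)+I(h_3)+I(h_4)$.
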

We will require the following special case of a result due to Hall and Tenenbaum (see \cite[Lem. 30.1]{HTDiv}).
\begin{lem}[Hall-Tenenbaum]\label{lem:TenEst}
Let $t \in [-1,1] \bk \{0\}$ and $z > w \geq e^{1/|t|}$. Let $\phi(u)$ be a $1$-bounded $2\pi$-periodic function of bounded variation. Then  
\begin{align} \label{eq:TenEst}
\sum_{w < p \leq z} \frac{\phi(t\log p)}{p} = \left(\frac{1}{2\pi} \int_0^{2\pi} \phi(u) du\right) \log\left(\frac{\log z}{\log w}\right) + O\left(V(\phi)\right),
\end{align}
where $V(\phi)$ denotes the total variation of $\phi$.
\end{lem}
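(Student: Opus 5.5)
The idea is to replace the sum over primes by an integral using the prime number theorem, change variables to $v=t\log u$, and then compare $\phi$ with its mean by partial integration. Replacing $\phi(u)$ by $\phi(-u)$ if necessary, we may assume $t\in(0,1]$; this changes neither the mean of $\phi$ nor $V(\phi)$. We interpret the error term as $O(1+V(\phi))$, which is the form in which the lemma is used (for constant $\phi$ the statement is just Mertens' theorem, whose error is $O(1)$). Write $\pi(u)=\text{li}(u)+E(u)$ with $E(u)\ll_A u(\log u)^{-A}$ for a fixed $A\geq 2$. Then
$$
\sum_{w<p\leq z}\frac{\phi(t\log p)}{p}=\int_w^z \frac{\phi(t\log u)}{u\log u}\,du+\int_w^z \frac{\phi(t\log u)}{u}\,dE(u),
$$
and we treat the main integral and the error integral separately.

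\emph{Main integral.} Substituting $v=t\log u$ turns the main integral into $\int_{t\log w}^{t\log z}\phi(v)\,\frac{dv}{v}$, with $t\log w\geq 1$ by the hypothesis $w\geq e^{1/|t|}$. Let $m:=\frac{1}{2\pi}\int_0^{2\pi}\phi$ and $\psi:=\phi-m$. The constant part contributes exactly $m\log(\log z/\log w)$, which is the main term of \eqref{eq:TenEst}. Since $\psi$ has mean zero on each period, its oscillation satisfies $|\psi|\leq V(\phi)$. Hence its primitive $\Psi(v)=\int_0^v\psi$ is $2\pi$-periodic and bounded by $2\pi V(\phi)$. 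Integrating by parts,
$$
\int_{a}^{b}\psi(v)\frac{dv}{v}=\Big[\frac{\Psi(v)}{v}\Big]_a^b+\int_a^b\frac{\Psi(v)}{v^2}\,dv\ll V(\phi),
$$
because $a=t\log w\geq 1$.

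\emph{Error integral.} Integrating by parts in Stieltjes form gives boundary terms $\ll |E(u)|/u\ll(\log w)^{-A}\ll 1$, plus
$$
\int_w^z |E(u)|\,\Big|d\Big(\frac{\phi(t\log u)}{u}\Big)\Big|.
$$
The piece coming from $d(1/u)$ is $\ll\int_w^\infty \frac{du}{u(\log u)^A}\ll 1$. The piece coming from $d\phi(t\log u)$ is the one step needing care, and I expect it to be the main obstacle: the bound must not depend on how many periods of $\phi$ the range $[w,z]$ contains. To handle it, split the range into blocks $t\log u\in[2\pi k,2\pi(k+1)]$. On each block $\phi(t\log u)$ has total variation $V(\phi)$, and $|E(u)|/u\ll(\log u)^{-A}\asymp(t/k)^{A}$. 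Since $t\log u\geq 1$, only $k\gg 1$ occurs, so this piece is
$$
\ll V(\phi)\sum_{k\geq 1}(t/k)^A\ll V(\phi)
$$
uniformly in $t\in(0,1]$. The first block may be only partially covered, which is harmless.

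Combining the main term, the $O(V(\phi))$ error from the main integral, and the $O(1+V(\phi))$ error from the error integral gives \eqref{eq:TenEst}.
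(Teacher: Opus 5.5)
Your proof is correct. The paper gives no argument of its own for this lemma; it simply quotes it from Hall and Tenenbaum's book (Lemma 30.1 of \emph{Divisors}). So your write-up is a self-contained replacement for the citation rather than a competitor to an argument in the text.

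Your route is the standard one:
\begin{itemize}
\item partial summation against $\pi(u)=\mathrm{li}(u)+E(u)$;
\item the substitution $v=t\log u$, which turns the main term into $\int_{t\log w}^{t\log z}\phi(v)\,dv/v$ with lower limit at least $1$;
\item integration by parts against the bounded periodic primitive of $\phi-m$;
\item a period-by-period decomposition of $\int |E|\,|d\phi(t\log u)|$.
\end{itemize}
The block decomposition is what makes the bound uniform in the number of periods. It genuinely needs a prime number theorem remainder saving $(\log u)^{-A}$ with $A>1$. With only a Mertens-type $O(1/\log u)$ remainder, the block sum would be $\sum_k V(\phi)\,t/k\asymp V(\phi)\,t\log(t\log z)$, which is unbounded. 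So the citation hides a genuine PNT input that your argument makes visible.

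Replacing the error term by $O(1+V(\phi))$ is not merely convenient but necessary. For constant $\phi$ one has $V(\phi)=0$, while $\sum_{w<p\le z}1/p-\log(\log z/\log w)$ is not identically zero. The change is harmless for the paper, which applies the lemma only to functions with $V=O(1)$ and only needs an $O(1)$ error. Your bounds actually give the slightly sharper $O(V(\phi)+1/\log w)=O(V(\phi)+|t|)$.

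Two small points to tidy up:
\begin{itemize}
\item The block $k=0$ (that is, $1\le t\log u\le 2\pi$) does occur, contrary to ``only $k\gg1$ occurs''. There you should use $(\log u)^{-A}\le t^{A}$, from $t\log u\ge 1$, in place of $(t/k)^{A}$.
\item Since $\phi$ may jump exactly at some point $t\log p$, the Stieltjes integration by parts should be written with one-sided values $E(u^{\pm})$. These satisfy the same bound $\ll u(\log u)^{-A}$, so nothing in the estimates changes.
\end{itemize}
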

 \begin{proof}[Proof of Proposition \ref{prop:Rx}]
Fix $1 \leq y \leq x$. 
When $y \leq e^{1/|\xi|}$ we have $\gamma_p \geq \cos(1) > 0$ for all $p \leq y$, and
$$
\gamma_p = 1+O(|\xi|^2 \log^2 p).
$$
Thus, by the prime number theorem,
%
$$
\mc{R}_f(y) = (1-\delta)\sum_{p \leq y} \frac{1}{p}+ O\left(|\xi|^2\sum_{p\leq e^{1/|\xi|}} \frac{\log^2p}{p}\right) = (1-\delta) \log \log y + O(1).
$$
Next, assume that $e^{1/|\xi|} < y \leq x$. We separate the sum over $p \leq y$ into the following subsets: 
\begin{align*}
&S_1 := \mb{P} \cap [1,e^{1/|\xi|}], \quad &S_2 := \{e^{1/|\xi|} < p \leq y : \gamma_p \geq 0\}, \\
&S_3 := \{e^{1/|\xi|} < p \leq y : 0 > \gamma_p > -\tfrac{1}{1+\delta}\}, \quad &S_4 := (\mb{P} \cap [1,y]) \bk (S_1 \cup S_2 \cup S_3).
\end{align*}
The sum over $p \in S_1$ is precisely $\mc{R}_f(e^{1/|\xi|})$, so by the previous calculation we have
$$
\sum_{p \in S_1} \frac{R(p)}{p} = (1-\delta) \log(1/|\xi|) + O(1).
$$
For the remaining ranges, we will use Lemma \ref{lem:TenEst}. Given a $2\pi$-periodic, bounded function $h$ we write
$$
I(h) := \frac{1}{2\pi} \int_{-\pi}^{\pi} h(u) du.
$$
Applying Lemma \ref{lem:TenEst} with the function 
$$
h_2(u) = (1-\delta) (\cos u)1_{\cos u \geq 0} = (1-\delta) (\cos u)1_{[-\pi/2,\pi/2]}(u),
$$
whose integral is $I(h_2) = (1-\delta)/\pi$ (and $V(h_2) = O(1)$), we may estimate the sum over $S_2$ as
$$
\sum_{p \in S_2} \frac{R(p)}{p} = \frac{1-\delta}{\pi} \log(|\xi|\log y) + O(1).
$$
For $S_3$ we use instead
$$
h_3(u) := (1+\delta) (\cos u) 1_{0 > \cos u > -\tfrac{1}{\delta + 1}}.
$$
By symmetry about $u =0$, and using the fact that $\sin(\cos^{-1}(1/(\delta+1)) = (1-1/(\delta+1)^2)^{1/2}$, we get
$$
I(h_3) = -\frac{1+\delta}{\pi} \left(1-\left(1-\frac{1}{(\delta + 1)^2}\right)^{1/2}\right)
$$
(and once again $V(h_3) = O(1)$). Therefore, by Lemma \ref{lem:TenEst},
$$
\sum_{p \in S_3} \frac{R(p)}{p} = -\frac{1+\delta}{\pi} \left(1-\left(1-\frac{1}{(\delta + 1)^2}\right)^{1/2}\right) \log(|\xi| \log y) + O(1).
$$
Finally, for $S_4$ we use 
$$
h_4(u) := (\cos u)1_{-\tfrac{1}{\delta+1} \geq \cos u \geq -1},
$$ 
for which Lemma \ref{lem:TenEst} similarly gives
$$
\sum_{p \in S_4} \frac{R(p)}{p} = -\frac{1}{\pi} \left(1-\frac{1}{(\delta+1)^2}\right)^{1/2}\log(|\xi|\log y) + O(1).
$$
Now, we observe that if $\theta := (1-1/(\delta+1)^2)^{1/2}$ then
$$
\frac{1-\delta}{\pi} - \frac{1+\delta}{\pi}(1-\theta) - \frac{\theta}{\pi} = \frac{\phi}{\pi}.
$$
Adding the contributions from the sets $S_j$ together, we obtain that for any $e^{1/|\xi|} < y \leq x$,
\begin{align} \label{eq:Rsum}
\mc{R}_f(y) &= (1-\delta) \log(1/|\xi|) + \frac{\phi}{\pi} \log(|\xi|\log y) + O(1) = (1-\delta) \log(1/|\xi|) - \frac{|\phi|}{\pi} \log(|\xi|\log y) + O(1),
\end{align}
and the first claim follows. The second claim is immediate from the first.
\end{proof}
Given $\delta \in (0,1/2)$ and $\phi = \phi(\delta)$ as in \eqref{eq:phiDef}, let us define 
\begin{equation*}
\alpha_0 = \alpha_0(\delta) := \frac{|\phi|}{\pi(1-\delta) + |\phi|}.
\end{equation*}
Note that as $0 < \delta < 1/2$ and $|\phi| < 2\delta$, we have
\begin{equation}\label{eq:alpha0Bd}
0 < \alpha_0 < \frac{4\delta}{\pi}.
\end{equation}
We obtain the following consequence from Proposition \ref{prop:Rx}. 
\begin{cor} \label{cor:largeRx}
Let $\delta \in (0,1/2)$ and $\alpha \in (\alpha_0(\delta),1)$, and set $\xi := (\log x)^{-\alpha}$. Then $\mc{R}_f(x) \gg \log\log x$.
\end{cor}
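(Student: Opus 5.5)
This follows by plugging $\xi = (\log x)^{-\alpha}$ into Proposition \ref{prop:Rx} with $y = x$ and reading off the coefficient of $\log\log x$.

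First I check the hypotheses. We have $\xi \in (0,1]$ for $x \geq e$, and $1/|\xi| = (\log x)^{\alpha} < \log x$ since $\alpha < 1$. Hence $x > e^{1/|\xi|}$, so the second case of \eqref{eq:Rsum} applies with $y = x$.

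Next I substitute $\log(1/|\xi|) = \alpha \log\log x$ and $\log(|\xi| \log x) = (1-\alpha)\log\log x$. This gives
$$
\mc{R}_f(x) = \left((1-\delta)\alpha - \frac{|\phi|}{\pi}(1-\alpha)\right)\log\log x + O(1).
$$
The coefficient equals $\alpha\left(1-\delta + |\phi|/\pi\right) - |\phi|/\pi$. It is positive exactly when
$$
\alpha > \frac{|\phi|/\pi}{1-\delta+|\phi|/\pi} = \frac{|\phi|}{\pi(1-\delta) + |\phi|} = \alpha_0(\delta).
$$
This holds by hypothesis. The coefficient is then a positive constant $\beta = \beta(\alpha,\delta)$, so $\mc{R}_f(x) = \beta \log\log x + O(1) \gg \log\log x$ for $x$ large.

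The only point needing care is that the implied constant in Proposition \ref{prop:Rx} is uniform in $\xi$. That uniformity comes from Lemma \ref{lem:TenEst}, whose error term depends only on the total variation of the functions $h_j$, and these variations are bounded independently of $\xi$. So the $O(1)$ term does not depend on $x$ through $\xi$, and no real obstacle remains.
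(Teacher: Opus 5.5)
Your proposal is correct and matches the paper's proof: apply Proposition~\ref{prop:Rx} with $y=x$ (valid since $(\log x)^{\alpha}<\log x$), substitute $\log(1/\xi)=\alpha\log\log x$ and $\log(\xi\log x)=(1-\alpha)\log\log x$, and note that the resulting coefficient $\alpha(1-\delta+|\phi|/\pi)-|\phi|/\pi$ is positive exactly when $\alpha>\alpha_0(\delta)$. Your remark that the $O(1)$ term is uniform in $\xi$ is correct and makes explicit a point the paper leaves implicit.
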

\begin{proof}
Since $x > e^{1/\xi}$, Proposition \ref{prop:Rx} yields
$$
\mc{R}_f(x) = (1-\delta) \alpha \log\log x - \frac{|\phi|}{\pi} (1-\alpha) \log\log x + O(1) = \frac{1}{\pi}\left(\alpha\left(\pi(1-\delta)+|\phi|\right)- |\phi|\right) \log\log x + O(1),
$$
with $\phi = \phi(\delta)$. As $\alpha > \alpha_0$ the bracketed expression is positive, and the claim follows.
\end{proof}
\subsection{A large peak for $L(s,f)\zeta(s)$} Our next objective is to prove the following.
\begin{prop}\label{prop:xiPeak}
There are absolute constants $c,\delta_0 > 0$ such that the following holds. Let $x$ be large and let $\sg = 1+1/\log x$. Let $\xi = (\log x)^{-\alpha}$ for some $\alpha \in (0,1/2)$ and let $\delta \in (0,\delta_0)$. Then the function $f = f_{\xi,\delta}$ constructed above satisfies
$$
|L(\sg + i\xi,f)\zeta(\sg + i\xi)| \gg (\sg-1)^{-c\delta} |L(\sg,f)|\zeta(\sg).
$$
\end{prop}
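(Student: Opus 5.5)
The plan is to take logarithms and reduce the claim to a lower bound for a sum over primes of a nonnegative quantity. That quantity is controlled by Proposition \ref{prop:consf} and evaluated with Lemma \ref{lem:TenEst}.

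First, applying \eqref{eq:Mertcons} with $h\in\{1,f\}$ and $t\in\{0,\xi\}$, uniformly in $t\in[-1,1]$, gives
$$
\log\frac{|L(\sg+i\xi,f)\zeta(\sg+i\xi)|}{|L(\sg,f)|\zeta(\sg)} = \sum_{p\le x}\frac{D(\xi\log p)}{p} + O(1),
$$
where for $u\in\mb{R}$ we put
$$
D(u) := -\text{Re}\big((1-e^{-iu})(1+f_u)\big) = I_u\sin u-(1-\cos u)(1+R_u),
$$
and $R_u,I_u$ are given by \eqref{eq:fDef} with $\gamma_p,\sg_p$ replaced by $\cos u,\sin u$. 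Thus $f(p)=R_{\xi\log p}+iI_{\xi\log p}$. By Proposition \ref{prop:consf}(2) we have $D(u)\ge 0$ for every $u$. Hence it suffices to prove
$$
\sum_{e^{1/\xi}<p\le x}\frac{D(\xi\log p)}{p}\ge c\delta\log\log x - O(1),
$$
since the primes $p\le e^{1/\xi}$ contribute nonnegatively and can be dropped. Note $e^{1/\xi}=e^{(\log x)^{\alpha}}<x$.

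Next, because $D\ge0$, I will lower-bound it by the truncation $h(u):=D(u)1_{\cos u\in[1/4,3/4]}$. On this set $\cos u>0$, so $R_u=(1-\delta)\cos u$ and $I_u=|\sin u|\cdot(\text{sign})$ with $I_u\sin u\ge 0$. Therefore
$$
D(u)=|\sin u|\sqrt{1-(1-\delta)^2\cos^2u}-(1-\cos u)(1+(1-\delta)\cos u).
$$
At $\delta=0$ this vanishes identically. Differentiating in $\delta$ at $\delta=0$ gives $\cos^2u+\cos u(1-\cos u)=\cos u$. Since $|\sin u|$ is bounded below on the region $\cos u\in[1/4,3/4]$, the second $\delta$-derivative is uniformly bounded there. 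So $D(u)=\delta\cos u+O(\delta^2)\ge \delta/8$ for $\delta<\delta_0$ small. Consequently $h$ is $2\pi$-periodic, $1$-bounded, of bounded variation $O(1)$, and has mean $\frac{1}{2\pi}\int_0^{2\pi}h\ge c_1\delta$ for an absolute $c_1>0$.

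Finally, Lemma \ref{lem:TenEst} with $t=\xi$, $w=e^{1/\xi}$, $z=x$ yields
$$
\sum_{e^{1/\xi}<p\le x}\frac{h(\xi\log p)}{p}\ge c_1\delta\log(\xi\log x)-O(1)=c_1\delta(1-\alpha)\log\log x-O(1)\ge \tfrac{c_1}{2}\delta\log\log x-O(1),
$$
using $\alpha<1/2$. Exponentiating and recalling $\log\log x=\log(1/(\sg-1))$ gives the claim with $c=c_1/2$.

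The only delicate step is the uniform lower bound $D\gg\delta$ on a fixed set of $u$ of positive measure. The Taylor expansion handles this, provided the region stays away from $\sin u=0$, where the square root is not smooth in $\delta$; this is why the truncation to $\cos u\in[1/4,3/4]$ is used. Everything else is bookkeeping with \eqref{eq:Mertcons} and Lemma \ref{lem:TenEst}.
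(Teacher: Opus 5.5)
Your proof is correct and follows essentially the same route as the paper: Mertens reduces the claim to $\sum_{p\le x}\Delta_f(p)/p$, Proposition \ref{prop:consf}(2) gives nonnegativity, you restrict to a fixed arc where $\cos u$ and $|\sin u|$ are bounded below, show the summand is $\gg\delta$ there, and finish with Lemma \ref{lem:TenEst}. The only difference is in how the $\gg\delta$ bound is obtained: the paper first passes to squares via $2\Delta_f(p)\ge I(p)^2\sg_p^2-(1+R(p))^2(1-\gamma_p)^2$ and gets $\delta\gamma_p\sg_p^2+O(\delta^2)$, whereas you Taylor-expand $D$ directly in $\delta$ (valid since $|\sin u|$ is bounded below on the arc) and get the slightly sharper $\delta\cos u+O(\delta^2)$.
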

\begin{proof}
We shall consider
$$
\Delta_f(p) := \text{Re}\left((1+f(p))(p^{-i\xi}-1)\right) = I(p) \sg_p - (1-\gamma_p)(1+R(p)), \quad p \leq x.
$$
By Mertens' theorem, we have (as in the derivation of \eqref{eq:realpart})
\begin{align}
\left|\frac{L(\sg+i\xi,f)\zeta(\sg+i\xi)}{L(\sg,f)\zeta(\sg)}\right| \asymp \exp\left(\sum_{p \leq x} \frac{\text{Re}(f(p)p^{-i\xi} + p^{-i\xi} - f(p)-1)}{p}\right) &= \exp\left(-\sum_{p \leq x} \frac{\text{Re}((1+f(p))(1-p^{-i\xi}))}{p}\right) \nonumber \\
&= \exp\left(\sum_{p \leq x} \frac{\Delta_f(p)}{p}\right). \label{eq:toDelta}
\end{align}
By Proposition \ref{prop:consf}(2), we have $\Delta_f(p) \geq 0$ for all $p$. 
Thus, in the notation of the proof of Proposition \ref{prop:Rx} (taking $y = x$), it suffices to consider only $p \in S_2$. \\
Fix $p \in S_2$ for the moment. Using $\Delta_f(p) \geq 0$, we see that
$$
I(p)\sg_p + (1+R(p))(1-\gamma_p) = 2I(p)\sg_p - \Delta_f(p) \leq 2I(p)\sg_p \leq 2.
$$
It follows that
\begin{align} \label{eq:toSq}
2\Delta_f(p) \geq I(p)^2 \sg_p^2 - (1+R(p))^2(1-\gamma_p)^2.
\end{align}
As $p \in S_2$ we have $R(p) = (1-\delta)\gamma_p$, thus
$$
I(p)^2\sg_p^2 = \sg_p^2(1-R(p)^2) = \sg_p^2(\sg_p^2 + 2\delta \gamma_p^2) + O(\delta^2).
$$
Moreover, we have
\begin{align*}
(1+R(p))^2(1-\gamma_p)^2 &= (1-\delta \gamma_p -(1-\delta)\gamma_p^2)^2 = 1 + (1-\delta)^2\gamma_p^4 + 2(\delta \gamma_p^3 - \delta \gamma_p - (1-\delta)\gamma_p^2) + O(\delta^2) \\
&= (1-2\gamma_p^2 + \gamma_p^4) + 2\delta\left(\gamma_p^2(1-\gamma_p^2) - \gamma_p(1-\gamma_p^2)\right)  + O(\delta^2) \\
&= \sg_p^4 + 2\delta \gamma_p^2 \sigma_p^2 - 2\delta \gamma_p \sigma_p^2 + O(\delta^2).
\end{align*}
Combining these two estimates with \eqref{eq:toSq}, we deduce that for each $p \in S_2$,
$$
\Delta_f(p) \geq \max\{0,\delta \gamma_p \sigma_p^2 + O(\delta^2)\}.
$$
We may restrict further to the set
$$
S' := \{e^{1/|\xi|} < p \leq x : \ \gamma_p \in [\tfrac{1}{10}, \tfrac{\sqrt{99}}{10}]\} \subseteq S_2,
$$
say, on which we have $\min\{\gamma_p,|\sg_p|\} \geq \tfrac{1}{10}$. Since the set 
$$
\{u \in [-\pi,\pi] : \ \cos u \in [\tfrac{1}{10}, \tfrac{\sqrt{99}}{100}]\}
$$ 
is an interval whose measure $c_0$ is a positive absolute constant (and whose indicator function is thus of variation bounded by an absolute constant), it follows from Lemma \ref{lem:TenEst} that
$$
\sum_{p \in S'} \frac{\Delta_f(p)}{p} \geq \left(\frac{\delta}{1000} + O(\delta^2)\right) \frac{c_0}{2\pi} \log(|\xi|\log x)  + O(1) \geq \frac{c_0(1-\alpha)\delta}{7000}\left(1+O(\delta)\right)  \log\log x + O(1).
$$
Now, there must exist some $\delta_0$ such that if $0 <\delta < \delta_0$ then the bracketed term $1+O(\delta) \geq 1/2$. Since $\alpha < 1/2$ by assumption, choosing $c := c_0/28000$ then yields 
$$
\sum_{p \leq x} \frac{\Delta_f(p)}{p} \geq \sum_{p \in S'} \frac{\Delta_f(p)}{p} \geq c\delta \log\log x + O(1) = c\delta \log\left(\frac{1}{\sg-1}\right) + O(1).
$$
Inserting this into the RHS of \eqref{eq:toDelta} completes the proof.
\end{proof}
%
\section{Proof of Theorem \ref{thm:counter}} \label{sec:proof}
In this section we shall prove Theorem \ref{thm:counter}. We need a few more lemmas. The first is a special case of a well-known bound of Halberstam and Richert \cite{HalRic}.
\begin{lem} \label{lem:HR}
Let $f \in \mc{F}$. Then for any $X \geq 2$,
$$
|\tilde{M}_{1\ast f}(X)| \ll \log X.
$$
\end{lem}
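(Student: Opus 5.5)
We prove Lemma \ref{lem:HR} by reducing it to a divisor-type bound. Write $g := 1\ast f$, so that $g(n) = \sum_{d \mid n} f(d)$. Since $f$ is $1$-bounded, we have
$$
|g(n)| \leq \sum_{d\mid n} |f(d)| \leq \tau(n)
$$
for every $n$, where $\tau$ is the divisor function. The triangle inequality then gives
$$
|M_{1\ast f}(X)| \leq \sum_{n \leq X} \tau(n).
$$
The divisor sum is handled by switching the order of summation:
$$
\sum_{n \leq X} \tau(n) = \sum_{d \leq X} \lfloor X/d\rfloor \leq X\sum_{d \leq X}\frac{1}{d} \ll X\log X
$$
for $X \geq 2$. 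Dividing by $X$ yields $|\tilde{M}_{1\ast f}(X)| \ll \log X$, which is the claim.

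Alternatively, we could invoke the Halberstam--Richert inequality directly for the non-negative multiplicative function $|g|$. Since $|g(p)| = |1+f(p)| \leq 2$, it gives
$$
\frac{1}{X}\sum_{n \leq X}|g(n)| \ll \frac{1}{\log X}\exp\left(\sum_{p\leq X}\frac{|1+f(p)|}{p}\right) \ll \log X.
$$
This route is not needed, since the elementary bound above already suffices.

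There is no real obstacle in this argument. The only point to check is that complete multiplicativity is not used, only $|f(d)|\leq 1$, so the bound is uniform over all $f\in\mc{F}$.
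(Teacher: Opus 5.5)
Your proof is correct, and your main argument takes a more elementary route than the paper's.

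The paper also starts from $|1\ast f(n)| \leq d(n)$, but uses it only to bring the non-negative multiplicative function $|1\ast f|$ within the scope of the Halberstam--Richert inequality. That gives
\[
\tilde{M}_{|1\ast f|}(X) \ll \exp\Bigl(\sum_{p\leq X}\frac{|1+f(p)|-1}{p}\Bigr),
\]
which is then bounded by $\log X$ using $|1+f(p)|\leq 2$. This is exactly the route you mention as an alternative.

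Your primary argument instead sums the divisor bound directly:
\[
\sum_{n\leq X} d(n) = \sum_{d\leq X}\lfloor X/d\rfloor \leq X(1+\log X).
\]
This is self-contained and needs no external citation. As you note, it also uses only $|f|\leq 1$, not the multiplicativity of $1\ast f$.

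The Halberstam--Richert route does give a sharper intermediate bound, which can be much smaller than $\log X$ when $|1+f(p)|$ is typically bounded away from $2$. However, the paper only ever uses the final estimate $\ll \log X$: in Lemma \ref{lem:Planch}, in the tail bound \eqref{eq:Tail}, and in the bound for $T_{1,s}$. So nothing is lost by using your simpler proof.
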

\begin{proof}
Since $|1\ast f(n)| \leq d(n)$ for all $n$, the main result of \cite{HalRic} and the triangle inequality yield
$$
|\tilde{M}_{1\ast f}(X)| \leq \tilde{M}_{|1\ast f|}(X) \ll \exp\left(\sum_{p \leq X} \frac{|1+f(p)| - 1}{p}\right) \ll \log X,
$$
as claimed.
\end{proof}
The following result is standard.
\begin{lem} \label{lem:Planch}
Let $x \geq 3$, let $\sg = 1+1/\log x$. Then for any $f \in \mc{F}$,
\begin{equation*}
\int_{-\infty}^{\infty} \frac{|L(\sg + it,1\ast f)|^2}{\sg^2 + t^2} dt \asymp \int_0^{\infty} |\tilde{M}_{1\ast f}(e^v)|^2 e^{-2(\sg-1)v} dv.
\end{equation*}
\end{lem}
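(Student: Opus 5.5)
This is Plancherel's identity for the Mellin transform, after a change of variables. Write $g := 1\ast f$. Since $|g(n)| \leq d(n)$, the Dirichlet series $L(s,g) = L(s,f)\zeta(s)$ converges absolutely for $\text{Re}(s) > 1$. Partial summation then gives, for $\text{Re}(s) > 1$,
$$
L(s,g) = s\int_1^\infty M_g(u) u^{-s-1}\,du = s\int_0^\infty \tilde{M}_g(e^v) e^{-(s-1)v}\,dv,
$$
where we substitute $u = e^v$ and use $M_g(e^v) = e^v\tilde{M}_g(e^v)$. The boundary term vanishes because $M_g(u) \ll u\log u$.

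Now set $s = \sg + it$ and define
$$
F(v) := \tilde{M}_g(e^v)e^{-(\sg-1)v}\,1_{v \geq 0}.
$$
Then
$$
\frac{L(\sg+it,g)}{\sg+it} = \int_{-\infty}^{\infty} F(v)e^{-itv}\,dv = \hat F(t).
$$
By Lemma \ref{lem:HR} we have $|F(v)| \ll (1+v)e^{-(\sg-1)v}$, so $F \in L^1 \cap L^2(\mb{R})$. Plancherel's theorem therefore applies and gives
$$
\int_{-\infty}^\infty \frac{|L(\sg+it,g)|^2}{|\sg+it|^2}\,dt = 2\pi\int_0^\infty |\tilde{M}_g(e^v)|^2 e^{-2(\sg-1)v}\,dv.
$$
Finally, $|\sg+it|^2 = \sg^2+t^2$, which yields the stated relation with the explicit constant $2\pi$.

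The only point needing care is justifying the integral representation and Plancherel. For the representation, we need the decay $M_g(u)u^{-\sg} \to 0$, which follows from the bound $M_g(u)\ll u\log u$ of Lemma \ref{lem:HR}. For Plancherel, we need square integrability of $F$, which follows from the same bound together with $\sg > 1$. No further obstacle is expected.
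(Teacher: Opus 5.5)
Your proposal is correct and follows the paper's proof essentially verbatim. Both define $F(v)=\tilde{M}_{1\ast f}(e^v)e^{-(\sg-1)v}1_{v\geq 0}$, use Lemma \ref{lem:HR} to place it in $L^1\cap L^2$, identify $\hat{F}(t)=L(\sg+it,1\ast f)/(\sg+it)$, and apply Plancherel. Your write-up is slightly more careful than the paper's: you justify the Fourier transform by partial summation, use the bound $(1+v)e^{-(\sg-1)v}$ (which also holds near $v=0$, unlike the paper's $v e^{-(\sg-1)v}$), and record the exact constant $2\pi$.
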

\begin{proof}
For $v \in \mb{R}$ we define the function
$$
G(v) := \tilde{M}_{1\ast f}(e^v) e^{-(\sg-1) v} = M_{1\ast f}(e^v) e^{-\sg v}.
$$ 
Clearly, $G(v) = 0$ for $v < 0$, and by Lemma \ref{lem:HR},
$$
|G(v)| \ll v e^{-(\sg-1)v},
$$ 
so that $G \in L^1(\mb{R}) \cap L^2(\mb{R})$. It is then readily verified that $G$ has Fourier transform
$$
\hat{G}(t) = \int_{-\infty}^{\infty} G(v) e^{-itv} dv = \frac{L(\sg + it,1\ast f)}{\sg + it}, \quad t\in\mb{R}.
$$
The claim now follows by Plancherel's theorem.
\end{proof}
\begin{proof}[Proof of Theorem \ref{thm:counter}]
Assume for the sake of contradiction that there is a constant $C > 0$ such that for every $f \in \mc{F}$ and every $y \geq 1$,
$$
|L_f(y)| \leq C\left(1+(\log y)e^{-\mb{D}(f,1;y)^2}\right).
$$
Then it follows as in \eqref{eq:Goldforh} that there is a further constant $C_1$ (depending on $C$) such that for any $f \in \mc{F}$ and $y \geq 1$,
\begin{equation}\label{eq:M1fy}
|\tilde{M}_{1\ast f}(y)| \leq C_1\left(1+\exp\left(\sum_{p \leq y} \frac{\text{Re}(f(p))}{p}\right)\right).
\end{equation}
We now select a scale $x$ to be chosen sufficiently large in an absolute sense, and set $\sg = 1 + 1/\log x$. We also let 
$$
\delta \in (0, \min\{\delta_0,\pi/12\}), \quad \alpha \in (\alpha_0(\delta),1/3),
$$
where $\delta_0,\alpha_0(\delta)$ are as defined in Section \ref{sec:props}, and 
$$
\xi := (\log x)^{-\alpha} \in [-1/3,1/3].
$$ 
We note that the range $\alpha \in (\alpha_0,1/3)$ is non-empty in light of \eqref{eq:alpha0Bd} and the assumption $\delta < \pi/12$.\\ Applying Lemma \ref{lem:Planch}, restricting to the interval $t \in [-1/2,1/2]$ on the Fourier side and making the change of variables $y = (\sg-1)v$ on the phase side, we obtain
\begin{align} \label{eq:Bessel}
\int_{-1/2}^{1/2} |L(\sg + it,f)|^2|\zeta(\sg + it)|^2 dt &\asymp \int_{-1/2}^{1/2} \frac{|L(\sg + it,1\ast f)|^2}{\sg^2 + t^2} dt \ll \int_0^{\infty} |\tilde{M}_{1\ast f}(e^v)|^2 e^{-2(\sg-1) v}dv \\
&= \frac{1}{\sg-1} \int_0^{\infty} |\tilde{M}_{1\ast f}(x^y)|^2 e^{-2y} dy. \nonumber
\end{align}
By Mertens' theorems, for any $1$-bounded sequence $(a_p)_{p}$ it holds that whenever $|t-\xi| \leq 1/\log x$,
$$
\sum_p \frac{a_p}{p^{\sg+it}} = \sum_{p \leq x} \frac{a_p}{p^{1+it}} + O(1) = \sum_{p \leq x} \frac{a_p}{p^{1+i\xi}} + O(1) = \sum_p \frac{a_p}{p^{\sg + i\xi}} + O(1).
$$
Applying this with $a_p = f(p)$ and $a_p = 1$, and restricting the $t$-integral further to the interval $[\xi-1/\log x, \xi + 1/\log x]$, the LHS in \eqref{eq:Bessel} is
$$
\geq \int_{\xi-\tfrac{1}{\log x}}^{\xi + \tfrac{1}{\log x}} |L(\sg + it,f)|^2 |\zeta(\sg + it)|^2 dt \asymp (\sg-1) |L(\sg + i\xi,f)|^2 |\zeta(\sg + i\xi)|^2.
$$
Since $\delta \in (0,\delta_0)$, Proposition \ref{prop:xiPeak} implies that there is an absolute constant $c > 0$ such that this is
$$
\gg (\sg-1)^{1-2c\delta} |L(\sg,f)|^2 \zeta(\sg)^2 \asymp (\sg-1)^{-1-2c\delta} \exp\left(2\sum_{p \leq x} \frac{\text{Re}(f(p))}{p}\right) = (\sg-1)^{-1-2c\delta} e^{2\mc{R}_f(x)},
$$
with $\mc{R}_f$ defined as in \eqref{eq:mcRDef}. On combining this with \eqref{eq:Bessel} and rearranging, we get that
\begin{equation}\label{eq:BessRef}
(\sg-1)^{-2c\delta} e^{2\mc{R}_f(x)} \ll \int_0^{\infty} |\tilde{M}_{1\ast f}(x^y)|^2e^{-2y} dy.
\end{equation}
Next, we truncate the integral on the RHS of \eqref{eq:BessRef} from above. By Lemma \ref{lem:HR}, we obtain for each $y \geq 1$ that
$$
|\tilde{M}_{1\ast f}(x^y)| 
\ll y(\log x).
$$ 
Squaring this and inserting it into the integral for $y \geq 4\log\log x$, we see (crudely) that
\begin{equation}\label{eq:Tail}
\int_{4\log\log x}^{\infty} |\tilde{M}_{1\ast f}(x^y)|^2 e^{-2y} dy \ll (\log x)^2 \int_{10\log\log x}^{\infty} y^2 e^{-2y} dy \ll (\sg-1)^2 \ll e^{2\mc{R}_f(x)}.
\end{equation}
Next, on the range $1 \leq y \leq 4\log\log x$ we observe that 
$$
|\mc{R}_f(x^y) - \mc{R}_f(x)| \leq \sum_{x < p \leq x^y} \frac{1}{p} = \log y + O\left(\frac{1}{\log x}\right) = \log\log\log x + O(1).
$$
Since $\alpha > \alpha_0(\delta)$, by Corollary \ref{cor:largeRx} we have that $\mc{R}_f(x) \gg \log\log x$, and so if $x$ is sufficiently large then, also, 
$$
\min_{1 \leq y \leq 4\log\log x} \mc{R}_f(x^y) \gg \log\log x.
$$ 
We therefore deduce using \eqref{eq:M1fy} that
$$
\int_1^{4\log\log x} |\tilde{M}_{1\ast f}(x^y)|^2 e^{-2y} dy \ll \max_{1 \leq y \leq 4\log\log x} |\tilde{M}_{1\ast f}(x^y)|^2 \ll e^{2\mc{R}_f(x)} \log^2\left(\frac{1}{\sg-1}\right),
$$
where the implicit constant now depends on $C$. Combining this with \eqref{eq:Tail} and inserting it into \eqref{eq:BessRef}, when $x$ is sufficiently large we obtain
\begin{equation}\label{eq:BesselFiner}
(\sg-1)^{-c\delta} e^{2\mc{R}_f(x)} \ll \int_0^1 |\tilde{M}_{1\ast f}(x^y)|^2 dy.
\end{equation}
Finally, we treat the integral over $0 \leq y \leq 1$ on the RHS by splitting it into length $\tfrac{1}{\log x}$ segments. Observe first that in view of \eqref{eq:triv}, for any $Z > e$ we have
\begin{equation}\label{eq:Lipfor1f}
\max_{Z/e < z \leq Z} |\tilde{M}_{1\ast f}(z) - \tilde{M}_{1\ast f}(Z)|  = O(1) + \max_{Z/e < z \leq Z} |L_f(z)-L_f(Z)| \ll 1. 
\end{equation}
It follows that 
$$
\max_{j/\log x \leq y <(j+1)/\log x} |\tilde{M}_{1\ast f}(x^y)|^2 \ll |\tilde{M}_{1\ast f}(e^j)|^2 + 1, \quad 0 \leq j \leq \log x.
$$
Thus, the RHS in \eqref{eq:BesselFiner} is
\begin{align*}
\ll \frac{1}{\log x} \sum_{0 \leq j \leq \log x} |\tilde{M}_{1\ast f}(e^j)|^2 + 1 &= \frac{1}{\log x} \sum_{0 \leq j \leq 1/|\xi|} |\tilde{M}_{1\ast f}(e^j)|^2 + \frac{1}{\log x} \sum_{1/|\xi| < j \leq \log x} |\tilde{M}_{1\ast f}(e^j)|^2 + 1 \\
&=: T_1 + T_2 + 1.
\end{align*}
Because it will be useful in the next section, we isolate a further range from $0 \leq j \leq 1/|\xi|$. Set 
\begin{equation}\label{eq:betaDef}
\beta := \alpha\left(1+\frac{|\phi|}{\pi} - \delta\right) - \frac{|\phi|}{\pi} \in (0,\alpha),
\end{equation}
and let $J_0 := \min\{1/|\xi|, (\log x)^{(1+2\beta)/3}\}$. We now divide $T_1 = T_{1,s} + T_{1,\ell}$, where
$$
T_{1,s} := \frac{1}{\log x} \sum_{0 \leq j \leq J_0} |\tilde{M}_{1\ast f}(e^j)|^2, \quad T_{1,\ell} := \frac{1}{\log x} \sum_{J_0 < j \leq 1/|\xi|} |\tilde{M}_{1\ast f}(e^j)|^2.
$$
Applying Lemma \ref{lem:HR}, noting that from the proof of Corollary \ref{cor:largeRx}, 
$$
\mc{R}_f(x) = \beta \log\log x + O(1),
$$ 
we find that
$$
T_{1,s} \ll \frac{1}{\log x} \sum_{0 \leq j \leq J_0} j^2 \ll \frac{J_0^3}{\log x} \ll (\log x)^{2\beta} \asymp e^{2\mc{R}_f(x)}.
$$
We next treat $T_{1,\ell}$ and $T_2$. 
In both cases we use \eqref{eq:M1fy} 
together with Proposition \ref{prop:Rx}
to get (with a suitably large implicit constant that depends on $C$)
\begin{equation}\label{eq:M1fbyj}
|\tilde{M}_{1\ast f}(e^j)| \ll e^{\mc{R}_f(e^j)} \ll \begin{cases} j^{1-\delta} &\text{ if } J_0 \leq j \leq |\xi|^{-1}, \\ 
								    |\xi|^{-(1-\delta +\tfrac{|\phi|}{\pi})} j^{-\tfrac{|\phi|}{\pi}} &\text{ if } |\xi|^{-1} < j \leq \log x. \end{cases}
\end{equation}
With this estimate, we thus see that
$$
T_{1,\ell} \ll (\sg-1) \sum_{J_0 < j \leq |\xi|^{-1}} j^{2(1-\delta)} \ll (\sg-1) |\xi|^{-(3-2\delta)} = (\sg-1)^{1-(3-2\delta)\alpha},
$$
and also that
$$
T_2  \ll (\sg-1) |\xi|^{-2(1-\delta+\tfrac{|\phi|}{\pi})} \sum_{|\xi|^{-1} < j \leq \tfrac{1}{\sg-1}} j^{-\tfrac{2|\phi|}{\pi}}.
$$
Recalling that $|\phi| \in [\delta,2\delta]$ and assuming that $\delta < 1/2$, say, we get that (on invoking Proposition \ref{prop:Rx} again)
$$
T_2 \ll (\sg-1)|\xi|^{-2(1-\delta+\tfrac{|\phi|}{\pi})} \cdot (\sg-1)^{-1+\tfrac{2|\phi|}{\pi}} \ll \exp\left(2(1-\delta) \log(1/|\xi|) - \frac{2|\phi|}{\pi} \log(|\xi|\log x)\right) \ll e^{2\mc{R}_f(x)}.
$$
Since $\alpha < 1/3$, it follows that
$$
(\sg-1)^{-2c\delta} e^{2\mc{R}_f(x)} \ll T_{1,s} + T_{1,\ell} + T_2 +1 \ll (\sg-1)^{1-(3-2\delta)\alpha} + e^{2\mc{R}_f(x)} +1 \ll e^{2\mc{R}_f(x)},
$$
where the implicit constant depends on $C$. Since $c\delta > 0$, this gives a contradiction as soon as $x$ is sufficiently large as a function of $C$. This proof is now complete.
\end{proof}
\begin{rem} \label{rem:counter}
Given the scale $x$, the proof above only used the assumed bound \eqref{eq:M1fy} to treat $\tilde{M}_{1\ast f}(X)$ when 
$$
\exp\left((\log x)^{(1+2\beta)/3}\right) < X \leq x^{4\log\log x}.
$$
Thus, whenever $x \geq x_0(C)$ there must exist some scale $X = x^\ast$ in this range for which \eqref{eq:M1fy} fails to hold. We will use this crucially in the next section.
\end{rem}
\section{Proof of Theorem \ref{thm:infScales}} \label{sec:infScales}
In this section we will show how the proof of Theorem \ref{thm:counter} can be used to construct a single function $f \in \mc{F}$ such that $L_f(x)$ fails the bound of Conjecture \ref{conj:Gold} infinitely often, thereby proving Theorem \ref{thm:infScales}. \\
\begin{proof}[Proof of Theorem \ref{thm:infScales}]
Throughout the proof we fix $\delta \in (0, \min\{\delta_0,\pi/12\})$ and $\alpha \in (\alpha_0(\delta),1/3)$, where $\delta_0$ and $\alpha_0$ are as in Section \ref{sec:props}. Let $(a_k)_k$ be any increasing, unbounded sequence. We will construct a function $f \in \mc{F}$ inductively so that there exists a sequence of scales $(x_k^\ast)_k$ on which 
\begin{align} \label{eq:Lfk}
|L_f(x_k^\ast)| \geq a_k \left(1 + \exp\left(\sum_{p \leq x_k^\ast} \frac{\text{Re}(f(p))}{p}\right)\right) \text{ for all } k \geq 1.
\end{align}
For the base case $k = 1$, we select $x_1 \geq x_0(a_1)$ and define $\xi_1 := (\log x_1)^{-\alpha}$. The proof of Theorem \ref{thm:counter} and Remark \ref{rem:counter} imply that there exists a scale $x_1^\ast \in [\exp((\log x)^{(1+2\beta)/3}), x_1^{4\log\log x_1}]$ on which the function $f_1 = f_{\xi_1,\delta}$ must satisfy
$$
|L_f(x_1^{\ast})| 
\geq a_1\left(1+\exp\left(\sum_{p \leq x_1^\ast} \frac{\text{Re}(f(p))}{p}\right)\right).
$$
Now, assume that there is $K \geq 1$, functions $f_1,\ldots,f_K$ and scales $x_1^{\ast},\ldots, x_K^\ast$ such that \eqref{eq:Lfk} holds with $f = f_k$, for each $1 \leq k \leq K$. We select $x_{K+1} > e^{x_K^\ast}$, eventually chosen sufficiently large in terms of $a_{K+1}$, and $\xi_{K+1} := (\log x_{K+1})^{-\alpha}$, and define $f_{K+1} \in \mc{F}$ on primes via
$$
f_{K+1}(p) := \begin{cases} f_K(p) &\text{ if } p \leq x_K^\ast \\ f_{\xi_{K+1},\delta}(p) &\text{ if } p > x_K^\ast. \end{cases}
$$
We will show that $f_{K+1}$ will also fail to satisfy Conjecture \ref{conj:Gold} with $C = a_{K+1}$, when $x_{K+1}$ is sufficiently large. As in the previous section we will assume for contradiction that this is not so, and will follow the proof of Theorem \ref{thm:counter} above (and hereafter the implicit constants may depend on $a_{K+1}$). \\
Let $\sg_{K+1} := 1 + 1/\log x_{K+1}$. In the notation of Proposition \ref{prop:xiPeak}, note that
$$
\left|\sum_{p \leq x_{K+1}} \frac{\Delta_{f_{K+1}}(p)}{p} -\sum_{p \leq x_{K+1}} \frac{\Delta_{f_{\xi_{K+1},\delta}}(p)}{p}\right| \leq 4\sum_{p \leq x_K^\ast} \frac{1}{p} < 4\log\log \log x_{K+1}.
$$
It follows that
$$
\left|\frac{L(\sg_{K+1} +i \xi_{K+1},1\ast f_{K+1})}{L(\sg_{K+1}, 1\ast f_{K+1})}\right| \gg \frac{1}{\log^4(\tfrac{1}{\sg_{K+1}-1})} \left|\frac{L(\sg_{K+1} +i \xi_{K+1}, 1\ast f_{\xi_{K+1},\delta})}{L(\sg_{K+1},1\ast f_{\xi_{K+1},\delta})}\right|
$$
Proposition \ref{prop:xiPeak} therefore implies that
\begin{align} \label{eq:xiPeakK}
|L(\sg_{K+1} + i\xi_{K+1},f_{K+1}) \zeta(\sg_{K+1}+i\xi_{K+1})| &\gg \frac{(\sg_{K+1}-1)^{-c\delta}}{\log^4(\tfrac{1}{\sg_{K+1}-1})} |L(\sg_{K+1}, f_{K+1})|\zeta(\sg_{K+1}) \nonumber\\
&\geq (\sg_{K+1}-1)^{-c\delta/2} |L(\sg_{K+1},f_{K+1})|\zeta(\sg_{K+1})
\end{align}
on taking $x_{K+1}$ slightly larger if needed.  \\
Similarly, we have that, uniformly over $y \leq x_{K+1}$,
\begin{equation}\label{eq:fKtofxiK}
\left|\mc{R}_{f_{K+1}}(y) - \mc{R}_{f_{\xi_{K+1},\delta}}(y)\right| \leq \sum_{p \leq x_K^\ast} \frac{1}{p} < \log\log\log x_{K+1} + O(1) = \log\log\left(\frac{1}{\sg_{K+1}-1}\right) + O(1).
\end{equation}
Now, following the proof of Theorem \ref{thm:counter} up to \eqref{eq:BessRef}, but using \eqref{eq:xiPeakK} in place of Proposition \ref{prop:xiPeak}, we see that
$$
(\sg_{K+1}-1)^{-c\delta} e^{2\mc{R}_{f_{K+1}}(x_{K+1})} \ll \int_0^{\infty} |\tilde{M}_{1\ast f_{K+1}}(x_{K+1}^y)|^2 e^{-2y} dy.
$$
We handle the ranges $y \geq 4 \log\log x_{K+1}$ and $1 \leq y \leq 4\log\log x_{K+1}$ as above, getting
$$
\int_1^{4 \log\log x_{K+1}} |\tilde{M}_{1\ast f_{K+1}}(x_{K+1}^y)|^2 e^{-2y} dy \ll e^{2\mc{R}_{f_{K+1}}(x)} \log^2\left(\frac{1}{\sg_{K+1}-1}\right) + (\sg_{K+1}-1)^2.
$$
We thus deduce that
$$
(\sg_{K+1}-1)^{-c\delta} e^{2\mc{R}_{f_{K+1}}(x_{K+1})} \ll \int_0^1 |\tilde{M}_{1\ast f_{K+1}}(x_{K+1}^y)|^2 dy.
$$
We may repeat the analysis of the range $[0,1]$ as well, but combine \eqref{eq:fKtofxiK} with \eqref{eq:M1fbyj} (which multiplies all of the bounds by $\log^2(\tfrac{1}{\sg_{K+1}-1})$) to obtain instead that
\begin{align*}
\int_0^1 |\tilde{M}_{f_{K+1}}(x_{K+1}^y)|^2 dy &\ll \log^2\left(\frac{1}{\sg_{K+1}-1}\right)\left(T_1 + T_2 + 1\right) \ll \log^2\left(\frac{1}{\sg_{K+1}-1}\right) e^{2\mc{R}_{f_{\xi_{K+1},\delta}}(x_{K+1})} \\
&\ll \log^{4}\left(\frac{1}{\sg_{K+1}-1}\right) e^{2\mc{R}_{f_{K+1}(x_{K+1})}}.
\end{align*}
If $x_{K+1}$ is sufficiently large then again this yields a contradiction. Now, by Remark \ref{rem:counter}, there must be a point\footnote{The $o(1)$ term here arises from the use of \eqref{eq:fKtofxiK}, which again changes $\beta \log\log x_{K+1}$ by $O(\log\log\log x_{K+1})$ at most.} 
$$
x_{K+1}^{\ast} \in \left[\exp((\log x_{K+1})^{(1+2\beta + o(1))/3}), x_{K+1}^{4\log\log x_{K+1}}\right]
$$ 
at which
$$
|L_{f_{K+1}}(x_{K+1}^\ast)| \geq a_{K+1} \left(1+\exp\left(\sum_{p\leq x_{K+1}^{\ast}} \frac{\text{Re}(f_{K+1}(p))}{p}\right)\right).
$$
Now since $x_{K+1} > e^{x_K^\ast}$ and $\beta > 0$, we have that $x_{K+1}^{\ast} \geq e^{(x_K^\ast)^{1/3-o(1)}} > x_K^\ast$. \\
This completes the inductive step. Thus, by induction we have produced an infinite sequence of functions $(f_k)_k \subset \mc{F}$ and an infinite increasing sequence $(x_k^\ast)_k$ such that
\begin{enumerate}[(i)]
\item for every $k \geq 1$, $f_{k+1}|_{[1,x_k^{\ast}]} = f_k$, 
\item for every $k \geq 1$, 
$$
|L_{f_k}(x_k^\ast)| \geq a_k \left(1 + \exp\left(\sum_{p \leq x_k^\ast} \frac{\text{Re}(f_k(p))}{p}\right)\right).
$$
\end{enumerate}
We may therefore define $f \in \mc{F}$ compatibly on primes via $f(p) = f_K(p)$ whenever $p \leq x_K^\ast$, from which \eqref{eq:Lfk} holds.
 Since $a_k \ra \infty$ as $k \ra \infty$. the claim follows.
\end{proof}

\bibliographystyle{plain}
\bibliography{GoldConj.bib}

\end{document}